\tikzset{every picture/.style={line width=0.75pt}} 
\newtheorem{proposition*}{Proposition}
\theoremstyle{definition} 
\newtheorem{defin*}[proposition*]{Definition}
\newtheorem{lemma*}[proposition*]{Lemma}
\newtheorem{theorem*}[proposition*]{Theorem}
\newtheorem{corollary*}[proposition*]{Corollary}
\title{On the positivity of Fourier transform of the stretched Gaußian function}
\author{HANWEN LIU}
\date{}
\begin{document}

\maketitle

\begin{abstract}
    The stretched Gaußian function $f(\mathbf{x})=\exp \left(-\|\mathbf{x}\|^s\right)$, as a real function defined on $\mathbb{R}^d$, has found numerous applications in mathematics and physics. For instance, to describe results from spectroscopy or inelastic scattering, the Fourier transform of the stretched Gaußian function is needed. For $s \in(0,2]$, we prove that the Fourier transform of $f(\mathbf{x})=\exp \left(-\|\mathbf{x}\|^s\right)$ is everywhere positive on $\mathbb{R}^d$.
\end{abstract}

Throughout this article, let $\mathbb{N}$ be the set of non-negative integers, ant let $\left(\mathbb{R}^d,\langle\cdot, \cdot\rangle\right)$ be the $d$-dimensional standard Euclidean space with 2-norm $\|\cdot\|$. For any continuous real function $f$ defined on a subset $A$ of $\mathbb{R}$ and $n$ times differentiable at an interior point $x$ of $A$, denote by $f^{(n)}(x)$ the $n$-th derivative of $f$ at $x$.

Recall that the complete Bell polynomials are recurrently defined as
$$
B_{n+1}\left(x_1, \ldots, x_{n+1}\right):=\sum_{k=0}^n\begin{pmatrix}
n \\
k
\end{pmatrix} B_{n-k}\left(x_1, \ldots, x_{n-k}\right) x_{k+1}
$$
for all $n \in \mathbb{N}$, with the initial value $B_0=1$.

Using the complete Bell polynomials, a special case of Faà di Bruno's formula takes the following form:
\begin{proposition*}\label{proposition_1}
    Let $\Omega$ be a non-empty open subset of $\mathbb{R}$, and let $g\colon \Omega \rightarrow \mathbb{R}$ be a smooth function. For the smooth function
    $$
    \begin{aligned}
    f\colon \Omega & \longrightarrow \mathbb{R} \\
    x & \longmapsto \exp (g(x))
    \end{aligned}
    $$
    the formula
    $$
    f^{(n)}(x)=f(x) \cdot B_n\left(g^{(1)}(x), \ldots, g^{(n)}(x)\right)
    $$
    holds for all $n \in \mathbb{N}$ and $x \in \Omega$.
\end{proposition*}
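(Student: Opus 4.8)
The plan is to prove the formula by induction on $n$, using only the defining recurrence of the complete Bell polynomials together with the elementary observation that $f=\exp(g)$ satisfies the first-order relation $f'=g'\cdot f$ on $\Omega$. (Here one first records that $f$ is smooth, being a composition of smooth functions, so that all the derivatives written below are well defined.)

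For the base case $n=0$ there is nothing to do: $f^{(0)}(x)=f(x)=f(x)\cdot 1=f(x)\cdot B_0$. For the inductive step I would assume, as a \emph{strong} inductive hypothesis, that $f^{(m)}(x)=f(x)\cdot B_m\bigl(g^{(1)}(x),\dots,g^{(m)}(x)\bigr)$ for all $m$ with $0\le m\le n$ and all $x\in\Omega$, and then differentiate the identity $f'=g'f$ a total of $n$ times via the Leibniz product rule. Since $(g')^{(k)}=g^{(k+1)}$, this gives
$$
f^{(n+1)}(x)=\bigl(g'\cdot f\bigr)^{(n)}(x)=\sum_{k=0}^{n}\binom{n}{k}\,g^{(k+1)}(x)\,f^{(n-k)}(x).
$$
Substituting the inductive hypothesis for each factor $f^{(n-k)}$ (legitimate, since $0\le n-k\le n$) and pulling out $f(x)$, the remaining sum is exactly the right-hand side of the recurrence defining $B_{n+1}$ evaluated at $\bigl(g^{(1)}(x),\dots,g^{(n+1)}(x)\bigr)$:
$$
f^{(n+1)}(x)=f(x)\sum_{k=0}^{n}\binom{n}{k}B_{n-k}\bigl(g^{(1)}(x),\dots,g^{(n-k)}(x)\bigr)\,g^{(k+1)}(x)=f(x)\,B_{n+1}\bigl(g^{(1)}(x),\dots,g^{(n+1)}(x)\bigr),
$$
which closes the induction.

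Since the whole argument is a short induction, I do not expect a genuine obstacle; the only delicate points are the index bookkeeping needed to line up the Leibniz expansion with the stated Bell-polynomial recurrence (checking that the summation ranges and the binomial coefficients agree, and that the derivatives of $g'$ are shifted derivatives of $g$), and making sure the inductive hypothesis is available at \emph{all} orders $0,\dots,n$ simultaneously, which is why it must be phrased as strong induction. An alternative route would be to start from the exponential generating-function identity $\sum_{n\ge 0} B_n(x_1,\dots,x_n)\,t^n/n!=\exp\bigl(\sum_{m\ge 1} x_m\,t^m/m!\bigr)$ and read off the relation for $B_{n+1}$ from the chain rule, but the inductive argument above uses nothing beyond the recurrence already given and is more self-contained.
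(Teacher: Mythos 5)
Your induction is correct: the base case is trivial, the relation $f'=g'f$ differentiated $n$ times by Leibniz gives exactly the sum appearing in the defining recurrence of $B_{n+1}$, and the strong inductive hypothesis is precisely what is needed to replace each $f^{(n-k)}$ for $0\le n-k\le n$. The paper itself offers no argument for this proposition, deferring instead to the reference on enumerative combinatorics, so your proof is not a different route but rather a self-contained write-up of the standard argument that the citation stands in for; it matches the recurrence given in the paper term by term and could be inserted verbatim in place of the citation.
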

\begin{proof}
    See for example \cite{1}.
\end{proof}

In order to prove our main theorem, we shall recall the definition of complete monotone functions.
\begin{defin*}\label{definition_2}
    A continuous function $f\colon[0,+\infty) \rightarrow \mathbb{R}$ is said to be complete monotone, if the restriction of $f$ on $(0,+\infty)$ is smooth, and $(-1)^n f^{(n)}(x) \geqslant 0$ for all $n \in \mathbb{N}$ and $x>0$.
\end{defin*}

An important example of complete monotone function is given in the lemma below.
\begin{lemma*}\label{lemma_3}
    Let $s \in(0,1]$. Then the continuous function
    $$
    \begin{aligned}
    f\colon[0,+\infty) & \longrightarrow \mathbb{R} \\
    x & \longmapsto \exp \left(-x^s\right)
    \end{aligned}
    $$
    is complete monotone.
\end{lemma*}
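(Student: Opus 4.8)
The plan is to combine Faà di Bruno's formula (the Proposition above) with an elementary sign analysis. Write $f(x)=\exp(g(x))$ with $g(x)=-x^{s}$, which is smooth on $(0,+\infty)$, and compute, for $k\geq 1$,
$$
g^{(k)}(x)=-\Big(\prod_{j=0}^{k-1}(s-j)\Big)x^{s-k},\qquad x>0 .
$$
Since $x^{s-k}>0$ and, for $s\in(0,1]$, the factor $s$ is positive while each of $s-1,s-2,\ldots,s-(k-1)$ is $\leq 0$, the product $\prod_{j=0}^{k-1}(s-j)$ has sign $(-1)^{k-1}$ (and vanishes when $s=1$ and $k\geq 2$). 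Hence $g^{(k)}(x)=(-1)^{k}\,|{\textstyle\prod_{j=0}^{k-1}(s-j)}|\,x^{s-k}$, so
$$
(-1)^{k}g^{(k)}(x)\geq 0\qquad\text{for all }k\geq 1\text{ and }x>0 ;
$$
equivalently, $x\mapsto x^{s}$ has completely monotone first derivative. This is precisely where the hypothesis $s\in(0,1]$ enters.

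Next I would record the relevant sign behaviour of the complete Bell polynomials: if real numbers $y_{1},\ldots,y_{n}$ satisfy $(-1)^{k}y_{k}\geq 0$ for every $k$, then $(-1)^{n}B_{n}(y_{1},\ldots,y_{n})\geq 0$. This follows by induction on $n$ from the defining recurrence, since
$$
(-1)^{n+1}B_{n+1}(y_{1},\ldots,y_{n+1})=\sum_{k=0}^{n}\binom{n}{k}\big[(-1)^{n-k}B_{n-k}(y_{1},\ldots,y_{n-k})\big]\big[(-1)^{k+1}y_{k+1}\big],
$$
and each bracketed factor is non-negative by the inductive hypothesis and the assumption on the $y_{k}$. (Alternatively one may invoke the closed form of $B_{n}$ as a sum over integer partitions with non-negative coefficients, in which every monomial $\prod_{i}y_{i}^{m_{i}}$ with $\sum_{i}i\,m_{i}=n$ carries the sign $(-1)^{n}$.)

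Finally I would put the two pieces together. By the Proposition, for $x>0$ and $n\in\mathbb{N}$,
$$
(-1)^{n}f^{(n)}(x)=f(x)\cdot(-1)^{n}B_{n}\big(g^{(1)}(x),\ldots,g^{(n)}(x)\big),
$$
where $f(x)=\exp(-x^{s})>0$ and the second factor is $\geq 0$ by the previous paragraph applied to $y_{k}=g^{(k)}(x)$; for $n=0$ this reads simply $f(x)>0$. Together with the continuity of $f$ on $[0,+\infty)$ and its smoothness on $(0,+\infty)$, this shows $f$ is complete monotone.

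I do not expect a serious obstacle here; the only delicate point is the sign bookkeeping — verifying that $\prod_{j=0}^{k-1}(s-j)$ really alternates in sign exactly when $s\in(0,1]$ (for $s>1$ it does not, and the conclusion indeed fails), and then carrying the $(-1)$-exponents consistently through both the derivative formula for $-x^{s}$ and the Bell-polynomial recurrence.
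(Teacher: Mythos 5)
Your proposal is correct and follows essentially the same route as the paper: write $f=\exp(g)$ with $g(x)=-x^{s}$, check that $(-1)^{k}g^{(k)}(x)\geq 0$ for $k\geq 1$ (this is where $s\in(0,1]$ enters), propagate the sign through the complete Bell polynomials by induction on the defining recurrence, and conclude via the Faà di Bruno proposition together with $e^{g(x)}>0$. Your sign bookkeeping is in fact carried out more carefully than in the paper's displayed formulas, so no changes are needed.
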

\begin{proof}
    Define $g\colon(0,+\infty) \rightarrow \mathbb{R}$ by $g(x):=-x^s$. Then we obtain
    $$
    (-1)^n g^{(n+1)}(x)=\dfrac{(-1)^{n+1}}{x^{n-s}} \dfrac{\Gamma(s+1)}{\Gamma(s-n+1)} \geqslant 0
    $$
    for all $n \in \mathbb{N}$ and $x>0$.

    By induction on $n \in \mathbb{N}$, we have that
    $$
    \begin{aligned}
    (-1)^n B_{n+1}\left(g^{(1)}(x), \ldots, g^{(n+1)}(x)\right) & =(-1)^n \sum_{k=0}^n\begin{pmatrix}
    n \\
    k
    \end{pmatrix} B_{n-k}\left(g^{(1)}(x), \ldots, g^{(n-k)}(x)\right) g^{(k+1)}(x) \\
    & =\sum_{k=0}^n\begin{pmatrix}
    n \\
    k
    \end{pmatrix}\left[(-1)^k g^{(k+1)}(x)\right](-1)^{n-k} B_{n-k}\left(g^{(1)}(x), \ldots, g^{(n-k)}(x)\right) \\
    & \geqslant 0
    \end{aligned}
    $$
    for all $n \in \mathbb{N}$ and $x>0$.

    Since $e^{g(x)}>0$ for all $x>0$, by Proposition \ref{proposition_1} we obtain
    $$
    (-1) f^{(n)}(x)=(-1)^n e^{g(x)} B_n\left(g^{(1)}(x), \ldots, g^{(n)}(x)\right) \geqslant 0
    $$
    for all $n \in \mathbb{N}$ and $x>0$.
\end{proof}

The following remarkable theorem of Bernstein characterizes the class of complete monotone functions.
\begin{theorem*}[Bernstein]\label{theorem_4}
    Let $f\colon[0,+\infty) \rightarrow \mathbb{R}$ be a continuous function. Then the following statements are equivalent:
    \begin{enumerate}
        \item[(i).] $f$ is complete monotone; 
        \item[(ii).] there exists a bounded increasing function $g\colon[0,+\infty) \rightarrow \mathbb{R}$ such that
        $$
        \int_0^{\infty} e^{-x t} \mathrm{d} g(t)=f(x)
        $$
        for all $x \in[0,+\infty)$.
    \end{enumerate}
\end{theorem*}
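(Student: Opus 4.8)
The plan is to prove the two implications separately. The implication (ii) $\Rightarrow$ (i) is routine: assuming $f(x)=\int_0^{\infty}e^{-xt}\,\mathrm{d}g(t)$ with $g$ bounded increasing, I would differentiate under the integral sign. On any compact subinterval $[a,b]\subset(0,+\infty)$ the integrand $t\mapsto t^n e^{-xt}$ is dominated by $t^n e^{-at}$, which is bounded and hence integrable against the finite measure $\mathrm{d}g$, so the Leibniz rule applies repeatedly and gives $f^{(n)}(x)=\int_0^{\infty}(-t)^n e^{-xt}\,\mathrm{d}g(t)$, whence $(-1)^n f^{(n)}(x)=\int_0^{\infty}t^n e^{-xt}\,\mathrm{d}g(t)\geqslant 0$ for $x>0$. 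Continuity of $f$ at $0$ follows from monotone convergence as $x\downarrow 0$. Hence $f$ is complete monotone.

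For (i) $\Rightarrow$ (ii) — which is the content of the theorem (the classical Hausdorff--Bernstein--Widder representation, which one might also simply cite) — I would reduce to the Hausdorff moment problem. Fix $N\in\mathbb{N}$ with $N\geqslant1$ and put $c_k:=f(k/N)$ for $k\in\mathbb{N}$. Since $f$ is continuous on $[0,+\infty)$ and smooth on $(0,+\infty)$, the iterated mean value theorem for finite differences, applied with step $h=1/N$, gives for every $k\in\mathbb{N}$ and $m\geqslant1$ that $(\Delta^m c)_k=(1/N)^m f^{(m)}(\xi)$ for some $\xi\in(k/N,(k+m)/N)\subset(0,+\infty)$, so that $(-1)^m(\Delta^m c)_k=(1/N)^m(-1)^m f^{(m)}(\xi)\geqslant0$ by complete monotonicity (and $c_k\geqslant0$ handles $m=0$). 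Thus the sequence $(c_k)_{k\geqslant0}$ is bounded (by $c_0=f(0)$, as complete monotonicity forces $f$ nonnegative and nonincreasing) and satisfies $(-1)^m(\Delta^m c)_k\geqslant0$ for all $k,m\in\mathbb{N}$, so Hausdorff's solution of the moment problem — itself provable by an elementary Bernstein-polynomial argument — produces a finite positive Borel measure $\nu_N$ on $[0,1]$ with $f(k/N)=\int_0^1 y^k\,\mathrm{d}\nu_N(y)$ for every $k$. Discarding the atom of $\nu_N$ at $0$ (it only affects $k=0$) and pushing the remainder forward under $y\mapsto-N\log y$ yields a finite positive measure $\sigma_N$ on $[0,+\infty)$ of total mass $\leqslant f(0)$ with $f(x)=\int_0^{\infty}e^{-xt}\,\mathrm{d}\sigma_N(t)$ for every $x$ in the lattice $\tfrac1N\mathbb{N}_{\geqslant1}$.

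Next I would take $N=M!$ and let $M\to\infty$. Since the lattices $\tfrac1{M!}\mathbb{N}_{\geqslant1}$ increase with $M$ and their union $D$ is dense in $(0,+\infty)$, every $x\in D$ satisfies $f(x)=\int_0^{\infty}e^{-xt}\,\mathrm{d}\sigma_{M!}(t)$ for all sufficiently large $M$. The measures $\sigma_{M!}$ all have total mass $\leqslant f(0)$, so by Helly's selection theorem (vague compactness of bounded families of measures) a subsequence converges vaguely to a finite positive measure $\sigma$ on $[0,+\infty)$; because $e^{-xt}$ vanishes at infinity for each $x>0$, the uniform mass bound lets us pass to the limit and obtain $f(x)=\int_0^{\infty}e^{-xt}\,\mathrm{d}\sigma(t)$ for all $x\in D$. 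Both sides are continuous on $(0,+\infty)$ — the right side by dominated convergence — and agree on the dense set $D$, hence for all $x>0$. Letting $x\downarrow0$, the left side tends to $f(0)$ by continuity and the right side to $\sigma([0,+\infty))$ by monotone convergence, so $\sigma$ is finite with $\sigma([0,+\infty))=f(0)$ and the representation holds at $x=0$ as well. Then $g(t):=\sigma([0,t])$ is bounded and increasing with $\int_0^{\infty}e^{-xt}\,\mathrm{d}g(t)=f(x)$ for all $x\geqslant0$, which is (ii).

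I expect the main obstacle to be the moment-problem input together with the limiting step: one must either invoke Hausdorff's theorem or reprove it, and one must worry about mass escaping to infinity in the Helly limit. The latter is self-correcting — the representation for $x>0$, combined with letting $x\downarrow0$ and monotone convergence, forces the limit measure to be finite with the correct total mass, so the representing $g$ is automatically bounded — but it is the point demanding the most care. A minor additional point is that the Riemann--Stieltjes integral appearing in (ii) coincides with the Lebesgue--Stieltjes integral against $\sigma$, which is harmless since $e^{-xt}$ is continuous and $\sigma$ is finite.
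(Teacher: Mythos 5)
Your proposal is correct in substance, but it takes a genuinely different route from the paper: the paper does not prove Bernstein's theorem at all, it simply cites Widder's \emph{The Laplace Transform}, whereas you reconstruct the classical Hausdorff--Bernstein--Widder argument. Your (ii) $\Rightarrow$ (i) direction (differentiation under the integral with the domination $t^n e^{-xt}\leqslant t^n e^{-at}$ against the finite measure $\mathrm{d}g$) is routine and fine. Your (i) $\Rightarrow$ (ii) direction is the standard discretization proof: sampling $f$ on the lattice $\tfrac1N\mathbb{N}$, using the mean value theorem for finite differences to turn complete monotonicity of $f$ into complete monotonicity of the sequence $\bigl(f(k/N)\bigr)_k$ (and you correctly note this works even though $f$ is only smooth on $(0,+\infty)$, since the iterated Rolle argument needs only continuity at the endpoint $0$), invoking Hausdorff's moment theorem, pushing forward by $y\mapsto -N\log y$, and then a Helly/vague-compactness limit over $N=M!$ with the uniform mass bound $f(0)$; the density argument on $(0,+\infty)$ and the monotone-convergence step as $x\downarrow 0$ correctly rule out loss of mass at infinity and recover the representation at $x=0$. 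The trade-off is clear: the paper's citation keeps the note short, since Bernstein's theorem is classical and not the point of the article, while your sketch is essentially self-contained modulo the Hausdorff moment problem, which you explicitly flag as the external input. One small wrinkle to fix in your final step: with $g(t):=\sigma([0,t])$ a possible atom of $\sigma$ at $t=0$ (e.g.\ when $f$ is constant, $\sigma=f(0)\,\delta_0$) is invisible to the Riemann--Stieltjes integral $\int_0^\infty e^{-xt}\,\mathrm{d}g(t)$, because the integral only sees increments of $g$ on $[0,\infty)$; set instead $g(0):=0$ and $g(t):=\sigma([0,t])$ for $t>0$, so the atom becomes a jump of $g$ at $0$ and is picked up by the integral --- this is consistent with the normalization $g(0)=0$ the paper later imposes in the proof of its Corollary.
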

\begin{proof}
    See for example \cite{2}.
\end{proof}

Now we are ready to state and proof our main theorem.
\begin{corollary*}\label{corollary_5}
    Let $s \in(0,2]$. Then the Fourier transform $\hat{f}$ of the Lebesgue integrable function
    $$
    \begin{aligned}
    f\colon \mathbb{R}^d & \longrightarrow \mathbb{R} \\
    \mathbf{x} & \longmapsto \exp \left(-\|\mathbf{x}\|^s\right)
    \end{aligned}
    $$
    is everywhere positive on $\mathbb{R}^d$, that is,
    $$
    \hat{f}(\boldsymbol{\xi})=\int_{\mathbb{R}^d} f(\mathbf{x}) e^{2 \pi i\langle\mathbf{x}, \boldsymbol{\xi}\rangle} \mathrm{d} \mathbf{x}>0
    $$
    for all $\boldsymbol{\xi} \in \mathbb{R}^d$.
\end{corollary*}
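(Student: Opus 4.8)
The plan is to use a subordination (Gaussian superposition) argument: since $s/2 \in (0,1]$, the one-variable function $u \mapsto \exp(-u^{s/2})$ falls under the Lemma above, so it is complete monotone and hence, by Bernstein's theorem, a Laplace transform of a finite positive measure; substituting $u = \|\mathbf{x}\|^2$ then exhibits $f$ as a positive superposition of Gaussians, whose Fourier transforms are manifestly positive. In detail, I would fix $s \in (0,2]$ and set $h\colon [0,+\infty) \to \mathbb{R}$, $h(u) := \exp(-u^{s/2})$. Because $s/2 \in (0,1]$, the Lemma shows $h$ is complete monotone, so Bernstein's theorem furnishes a bounded increasing function $g\colon [0,+\infty) \to \mathbb{R}$ with $h(u) = \int_0^\infty e^{-ut}\,\mathrm{d}g(t)$ for every $u \geqslant 0$. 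Writing $\mu$ for the associated finite positive Lebesgue--Stieltjes measure and putting $u = \|\mathbf{x}\|^2$, we obtain the representation $f(\mathbf{x}) = \exp(-\|\mathbf{x}\|^s) = \int_0^\infty e^{-t\|\mathbf{x}\|^2}\,\mathrm{d}\mu(t)$.

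Next I would record the elementary Gaussian Fourier transform, obtained by completing the square in each coordinate: for every $t > 0$ and $\boldsymbol{\xi} \in \mathbb{R}^d$,
\[
\int_{\mathbb{R}^d} e^{-t\|\mathbf{x}\|^2} e^{2\pi i \langle \mathbf{x}, \boldsymbol{\xi}\rangle}\,\mathrm{d}\mathbf{x} = \Big(\frac{\pi}{t}\Big)^{d/2} e^{-\pi^2 \|\boldsymbol{\xi}\|^2 / t},
\]
which is everywhere strictly positive. Inserting the integral representation of $f$ into the definition of $\hat{f}$ and interchanging the order of integration would then give
\[
\hat{f}(\boldsymbol{\xi}) = \int_0^\infty \Big(\frac{\pi}{t}\Big)^{d/2} e^{-\pi^2 \|\boldsymbol{\xi}\|^2 / t}\,\mathrm{d}\mu(t).
\]
The interchange is justified by Tonelli's theorem applied to the nonnegative integrand $(\mathbf{x}, t) \mapsto e^{-t\|\mathbf{x}\|^2}$: its double integral against $\mathrm{d}\mathbf{x} \otimes \mathrm{d}\mu$ equals $\int_{\mathbb{R}^d} f(\mathbf{x})\,\mathrm{d}\mathbf{x}$, which is finite since $f$ is Lebesgue integrable by hypothesis; this finiteness makes $(\mathbf{x}, t) \mapsto e^{-t\|\mathbf{x}\|^2} e^{2\pi i \langle \mathbf{x}, \boldsymbol{\xi}\rangle}$ jointly integrable, so Fubini's theorem applies.

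To finish, I would observe that $\mu$ cannot be supported at the origin alone: if $\mu$ vanished on $(0,+\infty)$, then $h(u) = \mu(\{0\})$ would be constant in $u$, contradicting $h(0) = 1 \neq e^{-1} = h(1)$. Since the integrand $(\pi/t)^{d/2} e^{-\pi^2\|\boldsymbol{\xi}\|^2/t}$ is strictly positive for each $t > 0$ while $\mu$ assigns positive mass to $(0,+\infty)$, the last integral is strictly positive, so $\hat{f}(\boldsymbol{\xi}) > 0$ for all $\boldsymbol{\xi} \in \mathbb{R}^d$, as claimed.

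I do not expect a serious obstacle here: the analytic core is already contained in the Lemma and Bernstein's theorem, and the rest is bookkeeping. The three points that need a little care are: where the hypothesis is used --- $s \leqslant 2$ is exactly the condition $s/2 \leqslant 1$ needed to invoke the Lemma; the Fubini/Tonelli step, which rests on the stated integrability of $f$; and the very last step, namely checking that the representing measure genuinely charges $(0,+\infty)$, which is what upgrades nonnegativity of $\hat{f}$ to strict positivity.
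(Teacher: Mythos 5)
Your proposal is correct and follows essentially the same route as the paper: subordination of $f$ to Gaussians via the Lemma and Bernstein's theorem, the explicit Gaussian Fourier transform, a Fubini--Tonelli interchange justified by the integrability of $f$, and finally the observation that the representing measure must charge $(0,+\infty)$. The only cosmetic difference is that you argue strict positivity directly from the measure charging $(0,+\infty)$, whereas the paper reaches the same conclusion by contradiction using the mean value theorem for Riemann--Stieltjes integrals (and it also verifies the integrability of $f$ explicitly rather than taking it as given).
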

\begin{proof}
    We shall first prove that $f \in L^1\left(\mathbb{R}^d\right)$. For any $r>0$, denote by $\partial B_r:=\left\{\mathbf{x} \in \mathbb{R}^d\colon\|\mathbf{x}\|=r\right\}$ the $(d-1)$-sphere of radius $r$ centered at 0, and denote by $\mathrm{d}\sigma$ its area element. Recall that the volume of the unit $(d-1)$-sphere $\partial B_1$ is 
    $$\int_{\partial B_1} \mathrm{d} \sigma = \dfrac{2 \pi^{d / 2}}{\Gamma(d / 2)}$$
    By the co-area formula,
    $$
    \begin{aligned}
    \int_{\mathbb{R}^d} f(\mathbf{x}) \mathrm{d} \mathbf{x} & =\int_{\mathbb{R}^d} \exp \left(-\|\mathbf{x}\|^s\right) \mathrm{d} \mathbf{x} \\
    & =\int_0^{\infty}\left(\int_{\partial B_x} \exp \left(-x^s\right) \mathrm{d} \sigma\right) \mathrm{d} x \\ 
    & =\dfrac{2 \pi^{d / 2}}{\Gamma(d / 2)} \int_0^{\infty} x^{d-1} \exp \left(-x^s\right) \mathrm{d} x \\ 
    & =\dfrac{2 \pi^{d / 2}}{s} \cdot \dfrac{\Gamma(d / s)}{\Gamma(d / 2)}
    \end{aligned}
    $$
    i.e. $f$ is Lebesgue integrable. In particular, $\hat{f}\colon \mathbb{R}^d \rightarrow \mathbb{R}$ is well-defined.

    Since $s / 2 \in(0,1]$, by Lemma \ref{lemma_3} the continuous function
    $$
    \begin{aligned}
    \varphi\colon [0,+\infty) & \longrightarrow \mathbb{R} \\
    x & \longmapsto \exp \left(-\sqrt{x}^s\right)
    \end{aligned}
    $$
    is completely monotone. Therefore by Theorem \ref{theorem_4} there exists a bounded increasing function $g\colon [0,+\infty) \longrightarrow \mathbb{R}$ such that
    $$
    \varphi(x)=\int_0^{\infty} e^{-x t} \mathrm{d} g(t)
    $$
    for all $x \in[0,+\infty)$. Without loss of generality, we assume $g(0)=0$. Notice that
    $$
    \begin{aligned}
    f(\mathbf{x}) & =\exp \left(-\|\mathbf{x}\|^s\right) \\
    & =\varphi\left(\|\mathbf{x}\|^2\right) \\
    & =\int_0^{\infty} \exp \left(-t\|\mathbf{x}\|^2\right) \mathrm{d} g(t)
    \end{aligned}
    $$
    for all $\mathbf{x} \in \mathbb{R}^d$.

    Fix any $\boldsymbol{\xi} \in \mathbb{R}^d$. Since the double integral
    $$
    \begin{aligned}
    \int_{\mathbb{R}^d} \int_0^{\infty}\left|\exp \left(-t\|\mathbf{x}\|^2-2 \pi i\langle\mathbf{x}, \boldsymbol{\xi}\rangle\right)\right| \mathrm{d} g(t) \mathrm{d} \mathbf{x} & =\int_{\mathbb{R}^d}\left(\int_0^{\infty} \exp \left(-t\|\mathbf{x}\|^2\right) \mathrm{d} g(t)\right) \mathrm{d} \mathbf{x} \\
    & =\int_{\mathbb{R}^d} f(\mathbf{x}) \mathrm{d} \mathbf{x} \\
    & <+\infty
    \end{aligned}
    $$
    by Fubini's theorem
    $$
    \begin{aligned}
    \int_{\mathbb{R}^d} f(\mathbf{x}) e^{-2 \pi i\langle \mathbf{x}, \boldsymbol{\xi}\rangle} \mathrm{d} \mathbf{x} & =\int_{\mathbb{R}^d}\left(\int_0^{\infty} \exp \left(-t\|\mathbf{x}\|^2\right) \mathrm{d} g(t)\right) e^{-2 \pi i\langle \mathbf{x}, \boldsymbol{\xi}\rangle} \mathrm{d} \boldsymbol{x} \\
    & =\int_0^{\infty}\left(\int_{\mathbb{R}^d} \exp \left(-t\|\mathbf{x}\|^2-2 \pi i\langle \mathbf{x}, \boldsymbol{\xi}\rangle\right) \mathrm{d} \mathbf{x}\right) \mathrm{d} g(t)
    \end{aligned}
    $$
    Moreover, since
    $$
    \int_{\mathbb{R}^d} \exp \left(-t\|\mathbf{x}\|^2-2 \pi i\langle\mathbf{x}, \boldsymbol{\xi}\rangle\right) \mathrm{d} \mathbf{x}=\left(\dfrac{\pi}{t}\right)^{d / 2} \exp \left(-\dfrac{\pi^2}{t}\|\boldsymbol{\xi}\|^2\right) \geqslant 0
    $$
    for all $t>0$, we obtain that
    $$
    \begin{aligned}
    \hat{f}(\boldsymbol{\xi}) & =\int_{\mathbb{R}^d} f(\mathbf{x}) e^{-2 \pi i\langle \mathbf{x}, \boldsymbol{\xi}\rangle} \mathrm{d} \mathbf{x} \\
    & =\int_0^{\infty}\left(\frac{\pi}{t}\right)^{d / 2} \exp \left(-\dfrac{\pi^2}{t}\|\boldsymbol{\xi}\|^2\right) \mathrm{d} g(t) \\
    & \geqslant 0.
    \end{aligned}
    $$
    It remains to prove that $\hat{f}(\boldsymbol{\xi}) \neq 0$. 
    
    Assume by contrary that $\hat{f}(\boldsymbol{\xi})$ vanishes. Take any $a, b \in \mathbb{R}$ such that $0<a<b$. By the mean value theorem for Riemann-Stieltjes integral, we have that
    $$
    \begin{aligned}
    0 & \leqslant[g(b)-g(a)] \inf _{a \leqslant t \leqslant b}\left(\frac{\pi}{t}\right)^{d / 2} \exp \left(-\frac{\pi^2}{t}\|\boldsymbol{\xi}\|^2\right) \\
    & \leqslant \int_a^b\left(\frac{\pi}{t}\right)^{d / 2} \exp \left(-\frac{\pi^2}{t}\|\boldsymbol{\xi}\|^2\right) \mathrm{d} g(t) \\ 
    & \leqslant \int_0^{\infty}\left(\frac{\pi}{t}\right)^{d / 2} \exp \left(-\frac{\pi^2}{t}\|\boldsymbol{\xi}\|^2\right) \mathrm{d} g(t) \\ 
    & =\hat{f}(\boldsymbol{\xi}) \\ 
    & =0
    \end{aligned}
    $$
    i.e. $[g(b)-g(a)] \cdot \inf \left\{\left(\frac{\pi}{t}\right)^{d / 2} \exp \left(-\frac{\pi^2}{t}\|\boldsymbol{\xi}\|^2\right)\colon a \leqslant t \leqslant b\right\}=0$. Since
    $$ \inf _{a \leqslant t \leqslant b}\left(\frac{\pi}{t}\right)^{d / 2} \exp \left(-\frac{\pi^2}{t}\|\boldsymbol{\xi}\|^2\right)=\left(\dfrac{\pi}{a}\right)^{d / 2} \exp \left(-\dfrac{\pi^2}{a}\|\boldsymbol{\xi}\|^2\right)>0$$
    we obtain $g(a)=g(b)$.

    Therefore there exists $\lambda \in \mathbb{R}$ such that $g(0)=0$ and $g(x) \equiv \lambda$ for all $x>0$. This proves that
    $$
    \begin{aligned}
    \exp \left(-\sqrt{x}^s\right) & =\varphi(x) \\
    & =\int_0^{\infty} e^{-x t} \mathrm{d} g(t) \\
    & =\int_0^{\infty} \delta(x) e^{-x t} \mathrm{d} t \\
    & =\lambda \cdot \exp (-x \cdot 0) \\
    & \equiv \lambda
    \end{aligned}
    $$
    a contradiction.

    Therefore we conclude that $\hat{f}(\boldsymbol{\xi})>0$ for all $\boldsymbol{\xi} \in \mathbb{R}^d$.
\end{proof}

\vspace{25pt}

University of Warwick, Coventry, CV4 7AL, UK

\href{hanwen.liu@warwick.ac.uk}{hanwen.liu@warwick.ac.uk}

\end{document}